\newtheorem{thm}{Theorem}
\newtheorem{lemma}[thm]{Lemma}
\newtheorem{cor}[thm]{Corollary}
\newtheorem{claim}[thm]{Claim}
\numberwithin{equation}{section}
\numberwithin{thm}{section}
\theoremstyle{definition}
\newtheorem{remark}[thm]{Remark}
\newcommand{\vol}{\mathrm{vol}}
\newcommand{\Z}{\mathbb{Z}}
\newcommand{\R}{\mathbb{R}}
\title{Grounded Lipschitz functions on trees are typically flat}
\author{
  Ron Peled\thanks{School of Mathematical Sciences, Tel Aviv University, Tel Aviv, Israel. E-mail address: {\tt peledron@post.tau.ac.il}. Supported by an ISF grant and an IRG grant.}
  \and
  Wojciech Samotij\thanks{School of Mathematical Sciences, Tel Aviv University, Tel Aviv, Israel; and Trinity College, Cambridge~CB2~1TQ, UK. E-mail address: {\tt ws299@cam.ac.uk}. Research supported in part by ERC Advanced Grant DMMCA, a~Trinity College JRF, and a grant from the Israel Science Foundation}
  \and
  Amir Yehudayoff\thanks{Department of Mathematics, Technion-IIT, Haifa, Israel. E-mail address: {\tt amir.yehudayoff@gmail.com}. Horev fellow -- supported by the Taub Foundation. Research supported by grants from ISF and BSF.}
}
\begin{document}

\date{}

\maketitle

\begin{abstract}
  A grounded $M$-Lipschitz function on a rooted $d$-ary tree is an integer-valued map on the vertices that changes by at most $M$ along edges and attains the value zero on the leaves. We study the behavior of such functions, specifically, their typical value at the root $v_0$ of the tree. We prove that the probability that the value of a uniformly chosen random function at $v_0$ is more than $M+t$ is doubly-exponentially small in $t$. We also show a similar bound for continuous (real-valued) grounded Lipschitz functions.
\end{abstract}

\section{Introduction}

This note studies the typical behavior of grounded Lipschitz
functions on trees. For an integer $M \ge 1$, call an integer-valued
function $f$ on the vertices of a graph \emph{$M$-Lipschitz} if
$|f(u) - f(v)| \le M$ for every two adjacent vertices $u$ and $v$.
We consider the rooted $d$-ary tree of depth $k$, that is, the tree
whose each non-leaf vertex, including the root, has $d$ children.
We denote this tree by $T(d,k)$ and its root by $v_0$.
Let $L_M(d,k)$ be the set of all $M$-Lipschitz functions on $T(d,k)$
which take the value zero on all leaves. For example, $L_M(d,1)$ has
$2M+1$ elements, one element for each of the possible values of
$f(v_0)$ between $-M$ and $M$.

The behavior of $M$-Lipschitz functions on trees and on regular
expander graphs was studied in~\cite{PSY}, where ``flatness'' of typical
$M$-Lipschitz functions was proved for relatively small values of
$M$ (depending on the degree and expansion properties of the graph).
For trees, the result of \cite{PSY} states that if
\begin{equation}\label{eq:MLip_on_expander_tree_cond}
M \le c d / \log d
\end{equation}
for some small absolute constant $c$, then a typical element of
$L_M(d,k)$ is very flat in the sense that on
all but a tiny fraction of the vertices it takes values between $-M$ and $M$.
Precisely, it was shown
there (see~\cite[Theorem~1.7]{PSY}) that under the assumption
\eqref{eq:MLip_on_expander_tree_cond}, if we let $f$ be a uniformly
chosen random function in $L_M(d,k)$ then for every integer $s\ge 1$
and vertex $v\in T(d,k)$,
\begin{equation*}
  \Pr(|f(v)| > sM) \le \exp\left(-\frac{d(d-1)^{s-1}}{5(M+1)}\right).
\end{equation*}

In this work we study further the distribution of the value at the root vertex $v_0$
for a uniformly chosen function in $L_M(d,k)$. Our main result is
that this value is very tightly concentrated around $0$ for
\emph{every} $M$ and $d \ge 2$, regardless of whether the
assumption~\eqref{eq:MLip_on_expander_tree_cond} is satisfied.

\begin{thm}
  \label{thm:main}
  Let $d \geq 2$ and $M,k \geq 1$. If $f$ is chosen uniformly at random from $L_M(d, k)$, then for every integer $t \geq 1$,
  \begin{equation}
    \label{eq:main}
    \Pr( f(v_0) = M+t ) \le (9/10)^{d^{\lfloor (t-1)/M \rfloor}} \cdot \Pr( f(v_0) = 0).
  \end{equation}
  Moreover, if $d \ge 3$, then the constant $9/10$ above may be replaced by $(3/4)^d$.
  In addition,
  \[
  \Pr ( f(v_0) = 0 ) \leq \frac{1}{1+2^{1-d}M}.
  \]
\end{thm}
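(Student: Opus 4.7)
The plan is to work with the counts $N_k(a) := |\{f \in L_M(d,k) : f(v_0) = a\}|$, which satisfy the recursion $N_k(a) = Z_{k-1}(a)^d$ with $Z_{k-1}(a) := \sum_{|b-a|\le M} N_{k-1}(b)$ and base $N_0(a) = \mathbf{1}[a=0]$. All three assertions of the theorem become inequalities about this integer sequence.

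I would first verify by induction on $k$ that each $N_k$ is symmetric ($N_k(a) = N_k(-a)$), unimodal (non-increasing in $|a|$), and log-concave; these properties propagate because convolution with the symmetric box $b \mapsto \mathbf{1}[|b|\le M]$ and the $d$-th power map both preserve all three. The bound $\Pr(f(v_0)=0) \le 1/(1+2^{1-d}M)$ then follows quickly: for $|a|\le M$ the window $[a-M, a+M]$ contains $[0,M]$, and by symmetry $\sum_{b=0}^M N_{k-1}(b) = \tfrac12(N_{k-1}(0) + Z_{k-1}(0)) \ge \tfrac12 Z_{k-1}(0)$, whence $N_k(a) = Z_{k-1}(a)^d \ge 2^{-d} N_k(0)$. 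Summing over the $2M$ nonzero values of $a$ in $[-M,M]$ yields $\sum_{a'} N_k(a') \ge (1+2^{1-d}M) N_k(0)$, which is the claim.

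For the main decay inequality I would induct on $k$, taking the full statement of the theorem at $k-1$ as the hypothesis. Setting $\gamma_k(a) := N_k(a)/N_k(0) = (Z_{k-1}(a)/Z_{k-1}(0))^d$, the target reduces to bounding $Z_{k-1}(M+t)/Z_{k-1}(0)$. When $t > M$, the entire window $[t, 2M+t]$ lies strictly above the central band $|x|\le M$, so each term $N_{k-1}(M+s)$ is controlled by the inductive bound $\rho^{d^{\lfloor (s-1)/M\rfloor}} N_{k-1}(0)$, where $\rho = 9/10$ (or $(3/4)^d$ for $d\ge 3$). Summing these piecewise bounds and dividing by the lower estimate on $Z_{k-1}(0)$ produced in the previous paragraph should recover an extra factor of $d$ in the exponent of $\rho$, matching the doubly-exponential rate after the $d$-th power is taken.

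The main obstacle is the base case $t\in[1,M]$, where the induction hypothesis gives no information about $N_{k-1}$ on the central band and one must argue directly that $Z_{k-1}(M+t)/Z_{k-1}(0)$ is bounded away from $1$ by a definite margin whose $d$-th power is at most $\rho$. The natural route is to use symmetry to write
\[
Z_{k-1}(0) - Z_{k-1}(M+t) = \sum_{x\in[-M,t-1]} N_{k-1}(x) - \sum_{x\in[M+1,2M+t]} N_{k-1}(x),
\]
pair indices as $x \leftrightarrow x+2M+1$ so that $|y|>|x|$ makes each pair non-negative by unimodality, and isolate the pair containing $x=0$, which contributes a full $N_{k-1}(0)$ minus a small tail bounded by the inductive hypothesis at depth $k-1$. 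I expect the case split $d=2$ vs.\ $d\ge 3$ in the theorem to reflect that for $d\ge 3$ the $d$-th power is aggressive enough to absorb a crude version of this pairing (giving $(3/4)^d$), while $d=2$ requires a slightly finer estimate to reach the weaker but still-workable constant $9/10$.
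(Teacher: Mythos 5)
Your overall plan matches the paper's: symmetry and unimodality propagate through the recursion $N_k(a) = Z_{k-1}(a)^d$, your derivation of $\Pr(f(v_0)=0) \le 1/(1+2^{1-d}M)$ is correct and essentially identical to the paper's, and the case $t>M$ is handled by iterating as you suggest. (Log-concavity, while true, is never needed.) The genuine gap is the base case $1\le t\le M$, and it traces to your choice of induction hypothesis.

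Because you compare $N_k(M+t)$ to $N_k(0)$, after taking $d$-th roots you must show $Z_{k-1}(M+t)/Z_{k-1}(0)\le \rho^{1/d}$. Your pairing $x\leftrightarrow x+2M+1$ together with the isolated pair at $x=0$ gives $Z_{k-1}(0)-Z_{k-1}(M+t)\ge (1-\rho^d)N_{k-1}(0)$, but $N_{k-1}(0)/Z_{k-1}(0)$ can be as small as $1/(2M+1)$ (nothing in your hypothesis prevents $N_{k-1}$ from being nearly flat on $\{-M,\dots,M\}$), so this yields only $Z_{k-1}(M+t)/Z_{k-1}(0)\le 1-(1-\rho^d)/(2M+1)$, which tends to $1$ as $M\to\infty$. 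This is not uniform in $M$ and fails for every fixed $d$, not just $d=2$. The paper avoids this by proving a strictly stronger, shift-invariant one-step statement as the inductive invariant: $G(s+M,k)\le\alpha\, G(s,k)$ for \emph{every} $s\ge 1$ (Lemmas~\ref{lem:withAlpha} and~\ref{lem:d=2}), rather than only relative to $G(0,k)$. Comparing the windows $\{t-M,\dots,t+M\}$ and $\{t,\dots,t+2M\}$, all $M$ terms that leave the window (indices $\{t+M+1,\dots,t+2M\}$) sit at depth $\ge 1$, so the hypothesis multiplies each of them by $\alpha$ or $\alpha^2$; since by unimodality those shifted terms account for about half the total mass, the ratio $G(t+M,k)/G(t,k)$ is bounded below $1$ uniformly in $M$. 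That is the decomposition $G(t,k)^{1/d}=A+B+C$ versus $G(t+M,k)^{1/d}\le\alpha^2A+\alpha B+C$ with $(M+1)(A+B)\ge MC$. Once this uniform one-step decay is established, the doubly-exponential bound follows by the iteration you sketch, and your intuition that $d=2$ requires a finer argument is right --- the paper strengthens the invariant further (an $\alpha$ vs.\ $\alpha^2$ split depending on whether $t<\lceil M/4\rceil$) to handle that case.
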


Of course, by symmetry, the theorem implies a corresponding bound on $\Pr( f(v_0) = -M-t )$.
We prove Theorem~\ref{thm:main} by induction on $k$. The argument has three
steps. The first step establishes that $p(t) := \Pr(f(v_0) = t)$ is
unimodal in $t$ with maximum at $t=0$. The second step shows that
$p(t)$ decays at least exponentially in $t/M$, i.e., that $p(t+M)
\leq 9 p(t)/10$ for $t \geq 1$. In the third step,
inequality~\eqref{eq:main} is derived by induction on $t$.

As a second result, we conclude that Theorem~\ref{thm:main} remains valid when we let $f$ be a uniformly chosen \emph{continuous} (i.e., real-valued) grounded Lipschitz function. Formally, we call a real-valued function $f$ on the vertices of a graph \emph{Lipschitz} if $|f(u) - f(v)| \le 1$ for every pair $u, v$ of adjacent vertices. Let $L_\infty(d,k)$ be the family of all such Lipschitz functions on $T(d,k)$ that take the value zero on all leaves.

\begin{thm}
  \label{thm:mainCon}
  Let $d \geq 2$ and $k \geq 1$.   If $f$ is chosen uniformly at random from   $L_\infty(d,k)$, then for every $x > 0$,
  \[
  \Pr( f(v_0) \geq 1+x ) \leq 2^{d+2} \cdot (9/10)^{d^{\lceil x \rceil - 1}}.
  \]
  Moreover, if $d \ge 3$, then the constant $9/10$ above may be replaced by $(3/4)^d$.
\end{thm}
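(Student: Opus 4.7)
The plan is to deduce Theorem~\ref{thm:mainCon} from Theorem~\ref{thm:main} via a lattice-point/rescaling argument. First, I would identify each grounded function with its vector of values on the set $V$ of non-leaf vertices of $T(d,k)$; under this identification $L_\infty(d,k)$ becomes a bounded convex polytope $P \subset \R^V$ of positive volume, and for each integer $M \ge 1$ one has $L_M(d,k) = \Z^V \cap MP$. For $x > 0$, the set $A_x := \{f \in P : f(v_0) \ge 1+x\}$ is a sub-polytope of $P$, hence Jordan measurable, so the standard lattice-point approximation in bounded convex bodies gives
\[
\Pr_\infty(f(v_0) \ge 1+x) \;=\; \frac{\vol(A_x)}{\vol(P)} \;=\; \lim_{M \to \infty} \frac{|\Z^V \cap MA_x|}{|\Z^V \cap MP|} \;=\; \lim_{M \to \infty} \Pr_M\bigl(g(v_0) \ge M(1+x)\bigr),
\]
where $\Pr_M$ is the uniform measure on $L_M(d,k)$. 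This reduces the continuous statement to a limit of discrete ones.

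Next, I would fix $M$ large, set $s := \lceil Mx \rceil$, split the event $\{g(v_0) \ge M+s\}$ into the atoms $\{g(v_0) = M+t\}$ for $t \ge s$, apply Theorem~\ref{thm:main} to each, and group the $t$'s by the common value $j := \lfloor (t-1)/M \rfloor$ (each full block contains $M$ consecutive $t$'s). This produces the estimate
\[
\Pr_M\bigl(g(v_0) \ge M+s\bigr) \;\le\; \Pr_M(g(v_0)=0) \cdot M \sum_{j \ge j_0(M)} (9/10)^{d^j},
\]
where $j_0(M) := \lfloor (s-1)/M \rfloor$. A short case check, separately for integer and non-integer $x$, shows that $j_0(M) = \lceil x \rceil - 1$ for all sufficiently large $M$. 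Theorem~\ref{thm:main} also supplies $M \cdot \Pr_M(g(v_0)=0) \le M/(1 + 2^{1-d}M) \to 2^{d-1}$ as $M \to \infty$.

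Finally, passing to the limit and noting that $\sum_{j \ge a}(9/10)^{d^j}$ is dominated by its leading term $(9/10)^{d^a}$ (the ratio of consecutive terms is $(9/10)^{d^j(d-1)} \le 9/10$), I would obtain a bound of the form $2^{d-1} \cdot C \cdot (9/10)^{d^{\lceil x \rceil - 1}}$ for a modest absolute constant $C$ coming from the series. A direct numerical estimate gives $C \le 4$, so $2^{d-1}C \le 2^{d+1} \le 2^{d+2}$, matching the prefactor in the theorem. The variant with $(3/4)^d$ replacing $9/10$ when $d \ge 3$ is obtained by the identical manipulation, the series only becoming smaller. The main technical care will be needed in the ceiling/floor bookkeeping that confirms $j_0(M) = \lceil x \rceil - 1$ survives the limit, and in verifying that the sum constant $C$ fits comfortably within the allotted $2^{d+2}$; the rest is reduction and routine estimation.
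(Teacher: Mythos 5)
Your approach is essentially the same as the paper's: both reduce the continuous statement to a limit over $M$ of the discrete case by identifying $L_\infty(d,k)$ with a convex polytope $P \subseteq \R^V$ and $L_M(d,k)$ with $\Z^V \cap MP$, both then split $\{g(v_0) \ge M(1+x)\}$ into atoms $\{g(v_0) = M+t\}$, apply Theorem~\ref{thm:main}, group the tail sum into blocks of $M$ consecutive values of $t$ sharing a common exponent $\lfloor(t-1)/M\rfloor$, and pass to the limit $M\to\infty$ using $\Pr(f_M(v_0)=0) \le 1/(1+2^{1-d}M)$. The only cosmetic differences are in presentation (you phrase the discrete-to-continuous step directly via lattice-point counting, the paper via convergence in distribution tested against uniformly continuous functions, but these are the same argument) and in bookkeeping (you parametrize via $s = \lceil Mx\rceil$, the paper starts its sum at $\lfloor x - 1/M\rfloor$; both yield $\lceil x\rceil - 1$ in the limit). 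One small caution: the bound "ratio of consecutive terms $\le 9/10$" alone gives only a geometric series constant $C \le 10$, which multiplied by $2^{d-1}$ overshoots $2^{d+2}$; you do need the super-geometric decay — the $j$-th ratio is $(9/10)^{d^j(d-1)} \le (9/10)^{2^j}$ — to get $C \le 4$ (or even $C\le 8$, which already suffices). With that observation made explicit, your proof is complete and matches the paper's.
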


We also mention briefly a related model, the model of \emph{random
graph homomorphisms}. An integer-valued function $f$ on the vertices
of a graph is called a graph homomorphism (or a homomorphism
height function) if $|f(u) - f(v)| = 1$ for every pair $u, v$ of
adjacent vertices. The study of the typical properties of a
uniformly chosen random graph homomorphism was initiated in
\cite{BHM} and results were subsequently obtained for:\ tree-like
graphs~\cite{BHM}, the hypercube~\cite{Gal, Kah2}, and (the
nearest-neighbor graph on finite boxes in) the $d$-dimensional
integer lattice $\Z^d$ for large $d$~\cite{Pel}. A lower bound on the typical range of random graph homomorphisms on general graphs was established in~\cite{BenYadYeh}. Graph
homomorphisms are similar to $1$-Lipschitz functions (see the Yadin
bijection described in~\cite{Pel} for a precise connection) and a
result analogous to our Theorem~\ref{thm:main} was proved for them
in~\cite{BHM}, using a similar, though somewhat simpler, method.

We end the introduction with a discussion of the role of the
assumption~\eqref{eq:MLip_on_expander_tree_cond}. In a forthcoming
paper \cite{Hhom} we will show that under this assumption, more is
true of a uniformly chosen function from $L_M(d,k)$. In fact, at a
vertex which is at even distance from the leaves, the function will
take the value $0$ with high probability, namely, with probability
at least $1 - 2\exp(-c d/M)$ for some absolute constant $c>0$. That
is, the function value is concentrated on a single number. Note that
this implies that for a vertex at odd distance from the leaves, with
a similarly high probability, the function value at all of its
neighbors is zero, and that conditioned on this event, the value at
the vertex is uniform on $\{-M,\ldots, M\}$.

We expect that such a strong concentration of the values of the
random function fails when $M\gg d$. More precisely, let us fix
$M,d$ such that $M\gg d$ and denote by $\mathcal{L}_k$ the
distribution of the value at the root for a tree of depth $k$. We
believe that $\mathcal{L}_k$ is no longer concentrated on a single
value when $k$ is even. Moreover, we suspect that $\mathcal{L}_k$
has a limit as $k$ tends to infinity (so that there is
asymptotically no distinction between even and odd depths). It would
be interesting to establish such a transition phenomenon between the
cases $M\ll d$ and $M\gg d$.

\section{From unimodality to doubly exponential decrease}

In this section, we prove Theorems~\ref{thm:main} and~\ref{thm:mainCon}. Fix integers $d \geq 2$ and $M \geq 1$. For integers $t$ and $k \geq 1$, we let
\[
G(t,k) = |\{f \in L_M(d,k) \colon f(v_0) = t\}|.
\]
Several times in our proofs, we will use the fact that $G(t,k) = G(-t,k)$ for every $t$ and $k$, which follows by symmetry.

\subsection{A recursive formula}

Since for every $k > 1$, the children of the root of $T(d,k)$ can be regarded as roots of isomorphic copies of $T(d,k-1)$, we have
\begin{equation}
  \label{eq:Gtk-recursion}
  G(t,k) = \sum_{t-M \leq t_1,\ldots,t_d \leq t+M} \prod_{s=1}^d G(t_s,k-1)
  = \left( \sum_{i = -M}^M G(t+i,k-1) \right)^d.
\end{equation}

\subsection{Unimodality}

The following claim establishes unimodality.

\begin{claim}
  \label{clm:first}
  If $k \geq 1$ and $t \geq 0$, then $G(t+1,k) \leq G(t,k)$.
\end{claim}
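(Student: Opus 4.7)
The plan is to proceed by induction on $k$, using the recursion~\eqref{eq:Gtk-recursion} for the inductive step. The base case $k=1$ is immediate: every $f \in L_M(d,1)$ is determined by $f(v_0) \in \{-M,\ldots,M\}$, so $G(t,1) = \mathbf{1}[|t| \leq M]$, which is plainly non-increasing on $\{0,1,2,\ldots\}$.

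For the inductive step, I would rewrite~\eqref{eq:Gtk-recursion} as $G(t,k) = H(t)^d$, where
\[
  H(t) := \sum_{i=-M}^{M} G(t+i,\,k-1).
\]
Because $H(t) \geq 0$ and $x \mapsto x^d$ is monotone on $[0,\infty)$, it suffices to show $H(t+1) \leq H(t)$ for every $t \geq 0$. A telescoping simplification gives
\[
  H(t+1) - H(t) = G(t+M+1,\,k-1) - G(t-M,\,k-1),
\]
so the task reduces to proving $G(t+M+1,\,k-1) \leq G(t-M,\,k-1)$ whenever $t \geq 0$.

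At this point I would combine the inductive hypothesis with the symmetry $G(s,k-1) = G(-s,k-1)$ recorded at the start of the section: together they say that $s \mapsto G(s,k-1)$ is non-increasing as a function of $|s|$ on $\Z$. A brief case split (on whether $t \geq M$ or $0 \leq t < M$) shows $|t-M| \leq t+M < t+M+1 = |t+M+1|$ for every $t \geq 0$, and the required inequality follows from this symmetric unimodality.

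I do not anticipate any real obstacle; once the correct form of the inductive hypothesis is in place, the argument is essentially mechanical. The one point deserving a moment's care is recognizing that the inductive hypothesis must be applied in its symmetric form---monotonicity of $G(\cdot,k-1)$ in $|s|$---rather than only on the non-negative axis, because the shifted arguments $t+i$ entering $H(t)$ can be negative when $t<M$.
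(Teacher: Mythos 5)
Your argument is correct and follows essentially the same path as the paper's: induction on $k$, reducing via the recursion~\eqref{eq:Gtk-recursion} to the single inequality $G(t+M+1,k-1) \leq G(t-M,k-1)$, which is then handled by combining the inductive hypothesis with the symmetry $G(s,k-1)=G(-s,k-1)$. Your packaging of that final step as ``$G(\cdot,k-1)$ is non-increasing in $|s|$'' is just a cleaner phrasing of the paper's two-case split on whether $t \geq M$.
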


\begin{proof}
  We prove the claim by induction on $k$. If $k=1$, then $G(t,k)=1$ if $0 \le t \leq M$ and $G(t,k) = 0$ if $t > M$.
  Assume that $k > 1$ and $t \ge 0$. By \eqref{eq:Gtk-recursion}, it suffices to show that
  \begin{equation}
    \label{eq:first-suff}
    G(t+M+1,k-1) \le G(t-M,k-1).
  \end{equation}
  To see this, we consider two cases. First, if $t-M \ge 0$, then \eqref{eq:first-suff} follows directly from the inductive assumption. Otherwise, we note that $t+M+1 \ge M-t > 0$ and hence by symmetry and induction,
  \[
  G(t+M+1,k-1) \le G(M-t,k-1) = G(t-M,k-1).\qedhere
  \]
\end{proof}

\subsection{Exponential decay of $G(t,k)$}

The following claim establishes exponential decay of $G(t,k)$. We start with the case $d \geq 3$. The case $d=2$ is more elaborate and we handle it separately later on.

\begin{lemma}
  \label{lem:withAlpha}
  If $d \geq 3$ and $k,t \geq 1$, then $G(t+M,k) \leq (3/4)^d \cdot G(t,k)$.
\end{lemma}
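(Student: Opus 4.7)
The plan is to induct on $k$. The base case $k=1$ is immediate because $G(t+M,1) = 0$ for $t \ge 1$. For the inductive step at $k \ge 2$, set $g(s) := G(s,k-1)$ and $H(t) := \sum_{i=-M}^{M} g(t+i)$; the recursion \eqref{eq:Gtk-recursion} gives $G(t,k) = H(t)^d$, so the desired bound $G(t+M,k) \le (3/4)^d\, G(t,k)$ is equivalent to the single-power inequality $H(t+M) \le \tfrac{3}{4}\, H(t)$ for all $t \ge 1$. The inductive hypothesis supplies $g(s+M) \le \alpha\, g(s)$ for every $s \ge 1$, where $\alpha := (3/4)^d$; the assumption $d \ge 3$ makes $\alpha \le 27/64 < 1/2$, and Claim~\ref{clm:first} together with symmetry makes $g$ symmetric and non-increasing on the non-negatives.

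I would split into two cases. When $t \ge M+1$, every $s \in [t, t+2M]$ satisfies $s-M \ge 1$, so the inductive hypothesis applies to every term of $H(t+M) = \sum_{s=t}^{t+2M} g(s)$, yielding after reindexing $H(t+M) \le \alpha\, H(t) < \tfrac{3}{4}\, H(t)$. The interesting range is $1 \le t \le M$, where the terms $g(s)$ with $s \in [t,M]$ are not bounded by the hypothesis. Here I would split the sum at $s = M$, using the inductive hypothesis only on the tail, to obtain $H(t+M) \le \sum_{s=t}^M g(s) + \alpha\, S_{t+M}$, where $S_j := \sum_{s=1}^j g(s)$. Using the symmetry of $g$, the denominator expands as $H(t) = g(0) + S_{M-t} + S_{t+M}$.

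The target inequality then rearranges to
\[
  (S_M - S_{t-1}) + (\alpha - \tfrac{3}{4})\, S_{t+M} \;\le\; \tfrac{3}{4}\, g(0) + \tfrac{3}{4}\, S_{M-t}.
\]
Since $\alpha - 3/4 \le 0$ and $S_{t+M} \ge S_M - S_{t-1}$ (the former contains the latter as a subsum of non-negative terms), the left side is at most $(\tfrac{1}{4} + \alpha)(S_M - S_{t-1})$. A shift of index combined with the non-increasing property of $g$ yields $S_M - S_{t-1} = \sum_{s=t}^M g(s) \le S_{M-t} + g(M-t+1)$, so the inequality reduces further to
\[
  (\alpha - \tfrac{1}{2})\, S_{M-t} + (\tfrac{1}{4} + \alpha)\, g(M-t+1) \;\le\; \tfrac{3}{4}\, g(0),
\]
which follows from $\alpha \le 1/2$ (making the first term non-positive) and $g(M-t+1) \le g(0)$ (making the second term at most $(\tfrac{1}{4} + \alpha)\, g(0) \le \tfrac{3}{4}\, g(0)$).

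The main obstacle is the range $1 \le t \le M$: the inductive hypothesis controls only about half of the terms of $H(t+M)$, and the uncontrolled head $\sum_{s=t}^M g(s)$ must be dominated by the portion of $H(t)$ absent from $H(t+M)$. The symmetry of $g$ around $0$ produces the large contribution $g(0) + S_{M-t}$ inside $H(t)$, which is just large enough to absorb the head provided $\tfrac{1}{4} + \alpha \le \tfrac{3}{4}$, i.e., $\alpha \le 1/2$; this is precisely where $d \ge 3$ is used.
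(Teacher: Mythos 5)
Your proof is correct and takes a genuinely different route from the paper's. Both arguments induct on $k$, reduce to the single-power inequality $H(t+M) \le \tfrac34 H(t)$, and split at $t > M$ vs.\ $1 \le t \le M$; the difference lies in how $H(t+M)$ is decomposed in the hard range $1 \le t \le M$. The paper splits $\{t,\dots,t+2M\}$ into $\{t,\dots,t+M\}$ (kept whole, called $C$) and $\{t+M+1,\dots,t+2M\}$, and then bounds the latter by shifting by $2M$ and by $M$ separately, so that the inductive hypothesis is applied \emph{twice} on one piece, giving $G(t+M,k)^{1/d} \le \alpha^2 A + \alpha B + C$; the subsequent estimate needs the auxiliary unimodality bounds $(M+1)(A+B) \ge MC$ and $MA \ge A+B$ and closes with a ratio of the form $\frac{1+\alpha + (1-\alpha+\alpha^2)/M}{2+1/M}$. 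You instead split at the threshold $s = M$, apply the hypothesis \emph{once} to the entire tail $\sum_{s=M+1}^{t+2M} g(s) \le \alpha S_{t+M}$, and then absorb the uncontrolled head $\sum_{s=t}^M g(s)$ using the contribution $g(0) + S_{M-t}$ coming from the negative half of $H(t)$ by symmetry. This avoids both the $\alpha^2$ factor and the $1/M$ bookkeeping, and reduces the whole case to the clean condition $\tfrac14 + \alpha \le \tfrac34$, i.e.\ $\alpha \le 1/2$, which holds exactly when $d \ge 3$. Your argument is therefore somewhat shorter and more transparent, though (as you note implicitly) it is rigid in requiring $\alpha \le 1/2$ and so cannot be tuned to reach $d=2$, whereas the paper's more refined decomposition --- using the $\alpha^2$ gain on part of the tail --- is the template that the authors later adapt, with additional case analysis, to handle $d=2$ in Lemma~\ref{lem:d=2}.
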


\begin{proof}
  Fix some $d \ge 3$. We prove the lemma by induction on $k$. Suppose that $t \ge 1$. If $k = 1$, then $G(t+M,k) = 0$ and the claimed inequality holds vacuously. Assume that $k > 1$. To simplify the notation, we let
  \[
  \alpha = (3/4)^d \qquad \text{and} \qquad G(s) = G(s,k-1) \quad \text{for each $s \in \Z$}.
  \]
  Moreover, for a set $S \subseteq \Z$, we let
  \[
  G(S) = \sum_{s \in S} G(s).
  \]
  If $t > M$, then by the inductive assumption,  $G(t+M+i) \leq \alpha G(t+i)$ for every $-M \leq i \leq M$, so \eqref{eq:Gtk-recursion} implies that
  \[
  G(t+M,k) = G( \{t, \ldots, t+2M \} )^d \leq (\alpha \cdot G(\{t-M, \ldots, t+M\}))^d = \alpha^d \cdot G(t,k).
  \]
  Assume that $1 \leq t \leq M$, let
  \[
  A = G(\{0, \ldots, t-1\}), \quad
  B = G(\{1, \ldots, M-t\}), \quad \text{and} \quad
  C = G(\{t, \ldots, t+M\}),
  \]
  and observe that, by~\eqref{eq:Gtk-recursion} and symmetry,
  \[
  G(t,k) = (A+B+C)^d.
  \]
  On the other hand, since
  \begin{equation}
    \label{eq:tM-tMM-cover}
    \{t + M + 1, \ldots, t + 2M\} = (\{1, \ldots, t\} + 2M) \cup (\{t+1, \ldots, M\} + M),
  \end{equation}
  then \eqref{eq:Gtk-recursion}, the inductive assumption, and Claim~\ref{clm:first} imply that
  \[
  G(t+M, k) \leq \big(\alpha^2 G(\{1, \ldots, t\}) +  \alpha G(\{t+1, \ldots, M\}) + C\big)^d \leq (\alpha^2 A + \alpha B + C)^d.
  \]
  Moreover, Claim~\ref{clm:first} implies
  \[
  (M+1)(A+B) \geq M C \quad \text{and} \quad M A \geq (A+B).
  \]
  It therefore follows that
  \begin{align*}
    \frac{G(t+M,k)}{G(t,k)} &
    \leq \left(\frac{\alpha^2 A + \alpha B + C}{A+B+C}\right)^d
    \leq \left(\frac{\alpha^2 A + \alpha B + (1 + 1/M)(A+B)}{A+B+(1+1/M)(A+B)}\right)^d \\
    & = \left(\frac{(1+\alpha+1/M)(A+B) - (\alpha - \alpha^2)A}{(2+1/M)(A+B)}\right)^d \\
    & \leq \left(\frac{1+\alpha+(1-\alpha+\alpha^2)/M}{2+1/M}\right)^d \leq \left( \max\left\{ \frac{1+\alpha}{2}, \frac{2+\alpha^2}{3} \right\} \right)^d
    \leq \alpha,
  \end{align*}
  where the final inequality holds by our assumption that $d \ge 3$.
\end{proof}

\begin{remark}
  \label{remark:alpha}
  It follows from the proof of Lemma~\ref{lem:withAlpha} that even when $d = 2$, the statement of the lemma still holds as long as we replace the constant $(3/4)^d$ with some constant $\alpha < 1$ that satisfies
  \begin{equation}
    \label{eq:alpha}
    \left(\frac{1+\alpha+(1-\alpha+\alpha^2)/M}{2+1/M}\right)^d \le \alpha.
  \end{equation}
  Unfortunately, the smallest solution to~\eqref{eq:alpha} tends to $1$ as $M \to \infty$. We thus need a more careful analysis to handle the case $d=2$. Our proof in the case $d=2$ shall require a mild lower bound on $M$. We therefore note that if $M \le 10$ and $\alpha = 9/10$, then \eqref{eq:alpha} is satisfied.
\end{remark}

\begin{lemma}
  \label{lem:d=2}
  Suppose that $d = 2$. For all $k, t \ge 1$, we have $G(t+M,k) \le (9/10) \cdot G(t,k)$.
\end{lemma}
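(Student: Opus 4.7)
The plan is to proceed by induction on $k$, with the base case $k = 1$ being immediate because $G(t+M, 1) = 0$ for every $t \ge 1$. For the inductive step I would split the argument according to the size of $M$: when $M \le 10$, Remark~\ref{remark:alpha} ensures that inequality~\eqref{eq:alpha} holds with $\alpha = 9/10$, so the proof of Lemma~\ref{lem:withAlpha} applies verbatim. The substantive case is $M \ge 11$, where the bound produced by that calculation deteriorates to $((1+\alpha)/2)^2 > \alpha$ as $M \to \infty$ and therefore cannot yield $G(t+M, k) \le \alpha\, G(t, k)$ for any $\alpha < 1$ on its own.

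For $M \ge 11$ I would sharpen both sides of the comparison using the inductive hypothesis. On the upper-bound side, I would replace the estimate $G(t+M, k)^{1/2} \le \alpha^2 A + \alpha B + C$ coming from~\eqref{eq:tM-tMM-cover} by the tighter estimate $G(t+M, k)^{1/2} \le (1+\alpha) C - \alpha\, G(t, k-1)$; the latter is obtained by pairing each term $G(t+M+j, k-1)$ for $j = 1, \ldots, M$ in the rightmost range $\{t+M+1, \ldots, t+2M\}$ with its shifted counterpart $G(t+j, k-1)$ inside $C$ and applying the inductive hypothesis. Symmetrically, I would use the inductive hypothesis in the reverse direction to pull terms of $C$ back to positions inside the extra range $\{t-M, \ldots, t-1\}$.

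For $t \ge M + 1$, every index in the extra range is at least $1$, so term-by-term application of the inductive hypothesis immediately yields $A + B \ge C/\alpha - G(t, k-1)$. A short algebraic manipulation then shows that the desired inequality $G(t+M, k) \le \alpha\, G(t, k)$ reduces to $C \ge G(t, k-1) \cdot \alpha/(1+\alpha)$, which is trivially true since the first term of $C$ is $G(t, k-1)$. The main obstacle is the subcase $1 \le t \le M$, where the extra range includes non-positive positions. Here I would fold those positions onto $\{1, \ldots, M-t\}$ via the symmetry $G(s, k-1) = G(-s, k-1)$, obtaining the representation $A + B + C = G(0, k-1) + 2\,G(\{1, \ldots, M-t\}, k-1) + G(\{M-t+1, \ldots, t+M\}, k-1)$. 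The double-counting of small indices supplies the extra mass needed to match the upper bound on $G(t+M, k)^{1/2}$, provided one further uses the inductive-hypothesis estimate $G(\{M+1, \ldots, t+M\}, k-1) \le \alpha\, G(\{1, \ldots, t\}, k-1)$ to control the upper half of $C$. The delicate step is the treatment of the boundary index $s = 0$, to which the inductive hypothesis does not apply; this is where most of the careful case analysis will occur, and where the specific choice $\alpha = 9/10$ together with the restriction $M \ge 11$ combine to close the argument.
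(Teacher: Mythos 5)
The proposal is missing the key idea in the paper's proof and, as written, cannot close the induction. The paper does not try to prove $G(t+M,k) \le \alpha\,G(t,k)$ directly; instead it proves a \emph{strengthened} inductive statement (display~\eqref{eq:GtMk-ind}): with $m = \lceil M/4\rceil$, it shows $G(t+M,k) \le \alpha\,G(t,k)$ for $1 \le t < m$ and, crucially, $G(t+M,k) \le \alpha^2\,G(t,k)$ for $t \ge m$. This strengthening is what supplies the extra slack needed for small $t$, and is then exploited through a further case split based on how much of the mass $G(\{1,\ldots,M\},k-1)$ lives in $\{m,\ldots,M\}$.

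Your plan keeps the plain hypothesis $G(s+M,k-1)\le\alpha\,G(s,k-1)$ for $s\ge 1$, and tries to win by sharper bookkeeping (term-by-term pairing, reflecting negative indices onto $\{1,\ldots,M-t\}$, bounding $G(\{M+1,\ldots,t+M\},k-1)\le\alpha\,G(\{1,\ldots,t\},k-1)$). None of these rules out the following near-worst case, which is consistent with the plain inductive hypothesis, symmetry, and Claim~\ref{clm:first}: take $G(s,k-1)\approx c\cdot\alpha^{\lceil s/M\rceil}$ for $s\ge 0$ (a ``step function'' that drops by a factor $\alpha$ after each block of $M$ indices), extended by symmetry. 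For $t=1$ and $M$ large, this gives $G(t+M,k)^{1/2}=G(\{1,\ldots,2M+1\},k-1)\approx cM(1+\alpha)$ and $G(t,k)^{1/2}=G(\{1-M,\ldots,M+1\},k-1)\approx 2cM$, so the ratio $G(t+M,k)/G(t,k)$ tends to $\bigl((1+\alpha)/2\bigr)^2 = 0.9025 > 9/10$. Your double-counting of $G(0,k-1)$ and the refinement $G(\{M+1,\ldots,t+M\},k-1)\le\alpha\,G(\{1,\ldots,t\},k-1)$ only shave lower-order terms and do not change this limit. The paper's strengthened hypothesis excludes exactly this scenario, since a step function with ratio $\alpha$ between consecutive blocks violates $G(s+M,k-1)\le\alpha^2\,G(s,k-1)$ for $s\ge m$. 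Relatedly, your final remark that the delicacy sits at the boundary index $s=0$ is misdirected: in the step-function example $G(0,k-1)$ is one of $\Theta(M)$ equal terms and is not the bottleneck; the real obstacle is the uniform rate $\alpha$ permitted by the unstrengthened hypothesis across all blocks.
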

\begin{proof}
  By Remark~\ref{remark:alpha}, we can safely assume that $M \ge 11$. Choose the following parameters:
  \[
  \alpha = 9/10, \quad \mu = 1/4, \quad \beta = 1/3 \quad \text{and} \quad m = \lceil \mu M \rceil.
  \]
  We are going to prove the following stronger statement by induction on $k$:
  \begin{equation}
    \label{eq:GtMk-ind}
    G(t+M,k) \leq
    \begin{cases}
      \alpha \cdot G(t,k) & \text{if $t \in \{1, \ldots, m-1\}$}, \\
      \alpha^2 \cdot G(t,k) & \text{if $t \geq m$}.
    \end{cases}
  \end{equation}
  If $k = 1$, then~\eqref{eq:GtMk-ind} holds vacuously as $G(t+M,k) = 0$ for every $t \geq 1$.
  Assume that $k > 1$ and fix some $t \ge 1$. To simplify notation, for every $s \in \Z$, we let
  \[
  G(s) = G(s,k-1)
  \]
  and for a set $S \subseteq \Z$,
  \[
  G(S) = \sum_{s \in S} G(s).
  \]
  If $t > M$, then by the inductive assumption and \eqref{eq:Gtk-recursion}, we have
  \[
  G(t+M,k) = G( \{t, \ldots, t+2M \} )^2 \leq (\alpha \cdot G(\{t-M, \ldots, t+M\}))^2 = \alpha^2 \cdot G(t,k).
  \]
  Assume that $1 \leq t \leq M$, let
  \[
  A = G(\{0, \ldots, t-1\}), \quad
  B = G(\{1, \ldots, M-t\})  \quad \text{and} \quad
  C = G(\{t, \ldots, t+M\}),
  \]
  and observe that by~\eqref{eq:Gtk-recursion} and symmetry,
  \[
  G(t,k) = (A+B+C)^2.
  \]
  We split the proof into two cases, depending on the value of $t$.

  \medskip
  \noindent
  {\bf Case 1: $\mathbf{t \ge m}$.}
  Identity~\eqref{eq:Gtk-recursion}, the inductive assumption, and Claim~\ref{clm:first} imply that (recall~\eqref{eq:tM-tMM-cover})
  \[
  \begin{split}
    G(t+M,k)^{1/2} & \le \alpha^2 \cdot \alpha \cdot G(\{1, \ldots, t\}) + \alpha^2 \cdot G(\{t+1, \ldots, M\}) + C \\
    & \le \left(\frac{t}{M} \cdot \alpha^3 + \frac{M-t}{M} \cdot \alpha^2 \right) \cdot (A+B) + C \\
    & \le (\mu \alpha^3 + (1-\mu)\alpha^2) \cdot (A+B) + C
  \end{split}
  \]
  and Claim~\ref{clm:first} implies that
  \begin{equation}
    \label{eq:ABC-two}
    (M+1)(A+B) \geq M C .
  \end{equation}
  It hence follows that
  \[
  \begin{split}
    \frac{G(t+M,k)}{G(t,k)} & \leq \left(\frac{(\mu \alpha^3 + (1-\mu)\alpha^2) \cdot (A+B) + C}{A+B+C}\right)^2 \\
    & \leq \left(\frac{1+\mu\alpha^3 + (1-\mu)\alpha^2 + 1/M}{2 + 1/M}\right)^2 <  \alpha^2,
  \end{split}
  \]
  where in the last inequality we used the assumption that $M \ge 11$.

  \medskip
  \noindent
  {\bf Case 2: $\mathbf{t < m}$.}
  Identity~\eqref{eq:Gtk-recursion}, the inductive assumption, and Claim~\ref{clm:first} imply that (recall~\eqref{eq:tM-tMM-cover})
  \begin{equation}
    \label{eq:shift-bound-2}
    G(t+M,k)^{1/2} \le
    \alpha^3 \cdot G(\{1, \ldots, t\}) + \alpha \cdot G(\{t+1, \ldots, m-1\}) + \alpha^2 \cdot G(\{m, \ldots, M\}) + C.
  \end{equation}
  We now further split into two cases, depending on whether or not the following inequality is satisfied:
  \begin{equation}
    \label{eq:split}
    G(\{m, \ldots, M\}) \ge \beta \cdot G(\{1, \ldots, M\}).
  \end{equation}

  If~\eqref{eq:split} holds, then by~\eqref{eq:shift-bound-2} and Claim~\ref{clm:first},
  \[
  \begin{split}
    G(t+M,k)^{1/2} & \le (\beta\alpha^2 + (1-\beta)\alpha) \cdot G(\{1, \ldots, M\}) + C \\
    & \le (\beta\alpha^2 + (1-\beta)\alpha) \cdot (A+B) + C.
  \end{split}
  \]
  Consequently, by~\eqref{eq:ABC-two},
  \[
  \begin{split}
    \frac{G(t+M,k)}{G(t,k)} & \leq \left(\frac{(\beta\alpha^2 + (1-\beta)\alpha) \cdot (A+B) + C}{A+B+C}\right)^2 \\
    & \leq \left(\frac{1+\beta\alpha^2 + (1-\beta)\alpha + 1/M}{2 + 1/M}\right)^2 < \alpha,
  \end{split}
  \]
  where in the last inequality we again used the assumption that $M \ge 11$.

  If~\eqref{eq:split} does not hold, then we let
  \[
  D = G(\{t, \ldots, m-1\}) \quad \text{and} \quad E = G(\{1, \ldots, M\}) .
  \]
  Identity~\eqref{eq:Gtk-recursion}, Claim~\ref{clm:first}, and the converse of~\eqref{eq:split} imply
  \[
  \begin{split}
    G(t+M,k)^{1/2} & = D + G(\{m, \ldots, 2M+t\}) \le D + \frac{2M+t-m+1}{M-m+1} \cdot G(\{m, \ldots, M\}) \\
    & < D + \frac{2M}{M-m+1} \cdot \beta \cdot E \le D + \frac{2}{1 - \mu} \cdot
    \beta \cdot E.
  \end{split}
  \]
  On the other hand, again by symmetry and Claim~\ref{clm:first}, we have that
  \[
  G(t,k)^{1/2} = G(\{t-M,\ldots,t+M\}) \geq D + E .
  \]
  So, as $D \le E$,
  \[
  \frac{G(t+M,k)}{G(t,k)}
  \le \left(1 - \frac{1-\mu-2\beta}{1-\mu} \cdot \frac{E}{D+E}\right)^2  \le \left(1 - \frac{1-\mu-2\beta}{2-2\mu}\right)^2
  \alpha. \qedhere
  \]
\end{proof}

\subsection{The full bound}

The exponential decay established in Lemmas~\ref{lem:withAlpha} and~\ref{lem:d=2} easily implies our main theorems.

\begin{cor}
  \label{cor:full-bound}
  If $k,t \geq 1$, then
  \begin{equation}
    \label{eq:GtpMk}
    G(t+M,k) \leq \alpha^{d^{\lfloor (t-1)/M \rfloor}} G(t,k),
  \end{equation}
  where $\alpha = 9/10$ if $d = 2$ and $\alpha = (3/4)^d$ if $d \ge 3$.
\end{cor}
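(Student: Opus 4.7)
The plan is to establish \eqref{eq:GtpMk} by induction on $k$, leveraging the recursion~\eqref{eq:Gtk-recursion} to bootstrap the single-step exponential decay supplied by Lemmas~\ref{lem:withAlpha} and~\ref{lem:d=2} into the claimed doubly-exponential bound. Throughout, write $n := \lfloor (t-1)/M \rfloor$. The base case $k = 1$ is immediate, since $G(t+M, 1) = 0$ whenever $t \ge 1$.

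For the inductive step, I would assume \eqref{eq:GtpMk} holds at depth $k - 1$ for every $t' \ge 1$, and split on the size of $t$. When $1 \le t \le M$, we have $n = 0$, and the target inequality reduces to $G(t+M, k) \le \alpha \cdot G(t, k)$, which is precisely the conclusion of Lemma~\ref{lem:d=2} (if $d = 2$) or Lemma~\ref{lem:withAlpha} (if $d \ge 3$). This disposes of the base range of $t$.

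When $t \ge M + 1$, I would expand both $G(t+M, k)$ and $G(t, k)$ using \eqref{eq:Gtk-recursion} as $d$-th powers of sums over $i \in \{-M, \ldots, M\}$, and then bound each term $G(t+M+i, k-1)$ by an $\alpha^{d^{n-1}}$-multiple of $G(t+i, k-1)$ via the inductive hypothesis applied at $(t+i, k-1)$. Summing over $i$ and taking the $d$-th power collapses the uniform factor into $\alpha^{d \cdot d^{n-1}} = \alpha^{d^n}$, matching the target exponent in \eqref{eq:GtpMk}.

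The main (and only) obstacle is index bookkeeping: one must verify that every shift $t + i$ lies in the range where the inductive hypothesis delivers the exponent $d^{n-1}$ rather than something weaker. Both $t + i \ge 1$ (so the hypothesis is applicable) and $t + i - 1 \ge (n-1)M$ (so $\lfloor (t+i-1)/M\rfloor \ge n-1$) follow at once from $t \ge nM + 1 \ge M + 1$ together with $i \ge -M$. Since $\alpha < 1$, any larger exponent $d^{\lfloor (t+i-1)/M\rfloor}$ that the hypothesis might output only strengthens the bound, so the uniform estimate $G(t+M+i, k-1) \le \alpha^{d^{n-1}} G(t+i, k-1)$ holds for all $i$ in the summation range, and no further case analysis is required.
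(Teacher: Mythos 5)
Your proposal is correct and follows essentially the same route as the paper: induction on $k$, disposing of $1 \le t \le M$ via Lemmas~\ref{lem:withAlpha} and~\ref{lem:d=2}, and for $t \ge M+1$ applying the inductive hypothesis termwise inside the recursion~\eqref{eq:Gtk-recursion} with the uniform exponent $d^{n-1}$ before raising to the $d$-th power. The paper phrases the shift as bounding the sum over $\{t,\ldots,t+2M\}$ by $\alpha^{d^{\lfloor (t-M-1)/M\rfloor}}$ times the sum over $\{t-M,\ldots,t+M\}$, which is the same bookkeeping as your $\lfloor (t+i-1)/M \rfloor \ge n-1$ check.
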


\begin{proof}
  We prove the statement by induction on $k$. If $k=1$, then $G(t+M,k)=0$. Assume that $k>1$. When $1 \leq t \leq M$, we have $\lfloor (t-1)/M \rfloor =0$, and~\eqref{eq:GtpMk} follows directly from Lemmas~\ref{lem:withAlpha} and~\ref{lem:d=2}. If $t \geq M+1$, then by the inductive assumption and \eqref{eq:Gtk-recursion} we have
  \[
  \begin{split}
    G(t+M,k) & = \big( G(t,k-1) + \ldots + G(t+2M,k-1) \big)^d \\
    & \leq   \big( \alpha^{d^{\lfloor (t-M-1)/M \rfloor}} (G(t-M,k-1) + \ldots + G(t+M,k-1) ) \big)^d \\
    & = \alpha^{d^{1+\lfloor (t-M-1)/M \rfloor}} G(t,k) = \alpha^{d^{\lfloor (t-1)/M \rfloor}} G(t,k).
    \qedhere
  \end{split}
\]
\end{proof}

\begin{proof}[Proof of Theorem~\ref{thm:main}]
  The first part of the theorem follows immediately from the corollary above. The upper bound on $\Pr(f(v_0) = 0)$ follows from the inequality $G(M,k) \geq 2^{-d} G(0,k)$, which we prove below, together with symmetry and Claim~\ref{clm:first}. If $k = 1$, we have $G(M,k) = G(0,k) = 1$. If $k > 1$, identity \eqref{eq:Gtk-recursion} and symmetry imply that
  \[
  \left( \frac{G(M,k)}{G(0,k)} \right)^{1/d}
  \geq \frac{1}{2}.
  \qedhere
  \]
\end{proof}

\begin{proof}[Proof of Theorem~\ref{thm:mainCon}]
  Let $f$ be a uniformly chosen random element of $L_\infty(d,k)$ and let $x > 0$. The claimed bound on the probability that $f(v_0)$ exceeds $1+x$ follows fairly easily from Theorem~\ref{thm:main}. To see this, let, for every positive integer $M$, $f_M$ be a uniformly chosen random element of $L_M(d,k)$. A moment of thought reveals that the sequence $f_M/M$ converges to $f$ in distribution.

  Indeed, letting $V$ be the set of internal (non-leaf) vertices of $T(d,k)$, one may naturally view $L_\infty(d,k)$ as a convex polytope $P \subseteq \R^V$. Let $\mu$ and $\mu_M$ be the distributions of $f$ and $f_M/M$, respectively. Observe that $\mu = \lambda / \vol(P)$, where $\lambda$ is the $|V|$-dimensional Lebesgue measure, and that $\mu_M$ is the uniform measure on the (finite) set $P \cap (\frac{1}{M}\Z)^V$. Since $P$ is compact (as clearly $P \subseteq [-k,k]^V$), every continuous function $g \colon P \to \R$ is uniformly continuous and therefore,
  \[
  \lim_{M \to \infty} \int_P g \, d\mu_M = \int_P g \, d\mu.
  \]

  Now, Theorem~\ref{thm:main} implies that for $M \geq 1/x$, letting $\alpha$ be as in the statement of Corollary~\ref{cor:full-bound},
  \[
  \begin{split}
    \Pr \left( \frac{f_M(v_0)}{M} \geq 1+x \right) & = \sum_{t \ge xM} \Pr(f_M(v_0) = M+t) \le \Pr(f_M(v_0) = 0) \cdot \sum_{s = \lfloor x-1/M \rfloor}^\infty M\alpha^{d^s} \\
    & \leq \frac{M}{1+2^{1-d}M} \cdot  \alpha^{d^{\lfloor x-1/M \rfloor}} \cdot \left(1 + \sum_{r = 1}^\infty \alpha^{d^r/2} \right) \le \frac{8M}{1+2^{1-d}M} \cdot \alpha^{d^{\lfloor x-1/M \rfloor}},
  \end{split}
  \]
  where in the last inequality we used the fact that $\alpha^{d^r/2} \le (9/10)^{2^{r-1}}$, and hence
  \[
  \Pr(f(v_0) \geq 1+x) = \lim_{M \to \infty} \Pr \left( \frac{f_M(v_0)}{M} \geq 1+x \right) \leq 2^{d+2} \cdot \alpha^{d^{\lceil x \rceil - 1}}.\qedhere
  \]
\end{proof}

\bibliographystyle{amsplain}
\bibliography{LipOnTree}

\end{document}